\newtheorem{definition}{Definition}[section]
\newtheorem{lemma}[definition]{Lemma}
\newtheorem{proposition}[definition]{Proposition}
\newtheorem{corollary}[definition]{Corollary}
\newtheorem{remark}[definition]{Remark}
\newtheorem{theorem}[definition]{Theorem}
\def\rawo\lonra{\longrightarrow}
\newenvironment{proof}{{\it Proof.}}{\hfill $ \square $ \vskip 4mm}
\begin{document}
\title{A  quotient of the braid group related  
to pseudosymmetric braided categories
\thanks{Research carried out while the authors were members of the 
CNCSIS project ''Hopf algebras, cyclic homology and monoidal categories''}}
\author{Florin Panaite\\
Institute of Mathematics of the 
Romanian Academy\\ 
PO-Box 1-764, RO-014700 Bucharest, Romania\\
e-mail: Florin.Panaite@imar.ro
\and 
Mihai D. Staic\thanks{Permanent address: Institute of Mathematics of the  
Romanian Academy, 
PO-Box 1-764, RO-014700 Bucharest, Romania.}\\
Department of Mathematics, Indiana University,\\
Rawles Hall, Bloomington, IN 47405, USA\\
e-mail: mstaic@indiana.edu} 
\date{}
\maketitle
\begin{abstract}
Motivated by the recently introduced concept of a {\em pseudosymmetric 
braided monoidal category}, we define the {\em pseudosymmetric group}  
$PS_n$, as the quotient of the braid group $B_n$ by the relations 
$\sigma _i\sigma _{i+1}^{-1}\sigma _i=\sigma _{i+1}\sigma _i^{-1}
\sigma _{i+1}$, with $1\leq i\leq n-2$. It turns out that $PS_n$ is 
isomorphic to the quotient of $B_n$ by the commutator subgroup 
$[P_n, P_n]$ of the pure braid group $P_n$ (which amounts to saying that  
$[P_n, P_n]$ coincides with the normal subgroup of $B_n$ generated by 
the elements $[\sigma _i^2, \sigma _{i+1}^2]$, with $1\leq i\leq n-2$), and  
that $PS_n$ is a linear group.     
\end{abstract}
\section*{Introduction}
${\;\;\;\;}$ 
The notion of {\em symmetric category} is a classical concept in category 
theory. It consists of a monoidal category  
${\cal C}$ equipped with a family of natural isomorphisms 
$c_{X, Y}:X\otimes Y\rightarrow Y\otimes X$ satisfying natural 
``bilinearity'' conditions together with the symmetry relation  
$c_{Y, X}\circ c_{X, Y}=id_{X\otimes Y}$, for all $X, Y\in {\cal C}$. This 
concept was generalized by Joyal and Street in \cite{joyalstreet}, by 
dropping this symmetry condition from the axioms and arriving thus 
at the concept of {\em braided category}, of central importance in 
quantum group theory (cf. \cite{k}, \cite{m}).

Inspired by some recently introduced categorical concepts 
called {\em pure-braided structures} and {\em twines} (cf. \cite{doru} 
and respectively \cite{brug}), in \cite{psvo} was defined the concept of  
{\em pseudosymmetric braiding}, as a generalization of symmetric braidings. 
A braiding $c$ on a strict monoidal category ${\mathcal C}$ is 
pseudosymmetric if it satisfies the following kind of modified 
braid relation:   
\begin{eqnarray*}
(c_{Y, Z}\otimes id_X)\circ (id_Y\otimes c_{Z, X}^{-1})\circ  
(c_{X, Y}\otimes id_Z)
&=&(id_Z\otimes c_{X, Y})\circ (c_{Z, X}^{-1}\otimes id_Y)
\circ (id_X\otimes c_{Y, Z}), 
\end{eqnarray*}
for all $X, Y, Z\in {\mathcal C}$. The main result in \cite{psvo} asserts 
that, if $H$ is a Hopf algebra with bijective antipode, then the 
canonical braiding of the Yetter-Drinfeld category $_H{\cal YD}^H$ 
is pseudosymmetric if and only if $H$ is commutative and cocommutative.

It is well-known that, at several levels, braided categories correspond to 
the braid groups $B_n$, while symmetric categories correspond to the 
symmetric groups $S_n$. It is natural to expect that there exist some groups 
corresponding, in the same way, to pseudosymmetric braided 
categories. Indeed, it is rather clear that these groups, denoted by 
$PS_n$ and called-naturally-the {\em pseudosymmetric groups}, should be the  
quotients of the braid groups $B_n$ by the relations 
$\sigma _i\sigma _{i+1}^{-1}\sigma _i=\sigma _{i+1}\sigma _i^{-1}
\sigma _{i+1}$. Our aim is to investigate and to determine more 
explicitely the structure of these groups. We prove first that the 
kernel of the canonical group morphism $PS_n\rightarrow S_n$ is 
abelian, and consequently $PS_n$ is 
isomorphic to the quotient of $B_n$ by the commutator subgroup 
$[P_n, P_n]$ of the pure braid group $P_n$ (this amounts to saying that 
$[P_n, P_n]$ coincides with the normal subgroup of $B_n$ generated by 
the elements $[\sigma _i^2, \sigma _{i+1}^2]$, with $1\leq i\leq n-2$). 

There exist similarities, but also differences, between braid groups and 
pseudosymmetric groups.   
Bigelow and Krammer proved that braid groups are linear (cf. \cite{big}, 
\cite{krammer}), and we show that so are pseudosymmetric groups. More 
precisely, we prove that the Lawrence-Krammer representation of $B_n$ 
induces a representation of $PS_n$ if the parameter $q$ is chosen to be 
$1$, and that this representation of $PS_n$ is faithful over       
${\mathbb R}[t^{\pm 1}]$. On the other hand, although $PS_n$ is an infinite  
group, like $B_n$, it {\em has} 
nontrivial elements of finite order, unlike $B_n$. 
\section{Preliminaries}\label{sec1}
\setcounter{equation}{0}
${\;\;\;\;}$
We recall briefly the concepts and results that inspired and motivated 
the present paper.
\begin{definition} (\cite{psv}) Let ${\mathcal C}$ be a strict 
monoidal category    
and $T_{X, Y}:X\otimes Y\to X\otimes Y$ a family of natural isomorphisms  
in ${\mathcal C}$.  
We call $T$ a {\bf strong twine}  
if for all $X, Y, Z\in {\mathcal C}$ we have:   
\begin{eqnarray}
&&T_{I,I}=id_I, \label{str1} \\
&&(T_{X, Y}\otimes id_Z)\circ T_{X\otimes Y, Z}=
(id_X\otimes T_{Y, Z})\circ T_{X, Y\otimes Z}, \label{str2} \\
&&(T_{X, Y}\otimes id_Z)\circ (id_X\otimes T_{Y, Z})=
(id_X\otimes T_{Y, Z})\circ (T_{X, Y}\otimes id_Z). \label{str3}
\end{eqnarray}
\end{definition}
\begin{definition} (\cite{psvo}) 
Let ${\mathcal C}$ be a strict monoidal category and $c$ a braiding on  
${\mathcal C}$. We say that $c$ is {\bf pseudosymmetric} if the following   
condition holds, for all $X, Y, Z\in {\mathcal C}$:
\begin{multline}
\;\;\;\;\;\;\;\;\;\;\;\;\;\;\;\;\;\;\;
(c_{Y, Z}\otimes id_X)\circ (id_Y\otimes c_{Z, X}^{-1})\circ  
(c_{X, Y}\otimes id_Z)\\
=(id_Z\otimes c_{X, Y})\circ (c_{Z, X}^{-1}\otimes id_Y)
\circ (id_X\otimes c_{Y, Z}). \label{pseudosymbraiding}
\end{multline}
In this case we say that ${\mathcal C}$ is   
a {\bf pseudosymmetric braided category}.  
\end{definition}
\begin{proposition} (\cite{psvo}) \label{carpseudosym}
Let ${\mathcal C}$ be a strict monoidal category and $c$ a braiding on  
${\mathcal C}$.  
Then the double braiding $T_{X, Y}:=c_{Y, X}\circ c_{X, Y}$ is a strong  
twine if and only if $c$ is pseudosymmetric.  
\end{proposition}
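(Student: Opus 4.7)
The plan is to verify the three axioms (str1), (str2), (str3) for the double braiding $T_{X,Y} = c_{Y,X} \circ c_{X,Y}$, and show that (str1) and (str2) hold automatically for any braiding, so that the real content of the proposition is the equivalence of (str3) with pseudosymmetry.

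For axiom (str1), since $c_{I,I} = id_I$ in any braided category (a consequence of the unit and hexagon axioms), one has $T_{I,I} = id_I$. For axiom (str2), I would apply the two hexagon axioms to expand $T_{X \otimes Y, Z}$ and $T_{X, Y \otimes Z}$ as compositions of braidings between pairs of objects, and then verify by naturality together with the braid (Yang--Baxter) relation that both sides of (str2) reduce to the same six-fold composition. This step is standard for the double braiding and does not use any pseudosymmetry hypothesis.

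The core of the argument is the equivalence between (str3) and pseudosymmetry. It is convenient to pass to braid-group shorthand: on the object $X \otimes Y \otimes Z$, write $\sigma_1$ for a positive crossing of the first two strands (the appropriate instance of $c$) and $\sigma_2$ for a positive crossing of the last two. Then $T_{X,Y} \otimes id_Z$ and $id_X \otimes T_{Y,Z}$ correspond to $\sigma_1^2$ and $\sigma_2^2$ respectively, so (str3) reads $\sigma_1^2 \sigma_2^2 = \sigma_2^2 \sigma_1^2$, while the pseudosymmetry condition reads $\sigma_1 \sigma_2^{-1} \sigma_1 = \sigma_2 \sigma_1^{-1} \sigma_2$. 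Using the braid relation $\sigma_1 \sigma_2 \sigma_1 = \sigma_2 \sigma_1 \sigma_2$ (call its common value $\Delta$), a direct computation rewrites
\begin{equation*}
\sigma_1^2 \sigma_2^2 = \sigma_1 (\sigma_1 \sigma_2) \sigma_2 = \sigma_1 (\sigma_2 \sigma_1 \sigma_2 \sigma_1^{-1}) \sigma_2 = \Delta \cdot (\sigma_2 \sigma_1^{-1} \sigma_2),
\end{equation*}
and symmetrically $\sigma_2^2 \sigma_1^2 = \Delta \cdot (\sigma_1 \sigma_2^{-1} \sigma_1)$. Cancelling $\Delta$ yields the desired equivalence.

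The main obstacle is conceptual rather than algebraic: one must check that the braid-group shorthand is a faithful record of equalities between natural transformations involving the specific objects $X, Y, Z$, invoking naturality of $c$ to justify each rewriting step. Once this is done, the proof reduces to the short computation displayed above together with the routine verifications of (str1) and (str2).
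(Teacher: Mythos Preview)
Your argument is correct. Note, however, that the paper does not actually supply a proof of this proposition: it is quoted from \cite{psvo} as a preliminary result. What the paper \emph{does} prove is the purely braid-group analogue, Proposition~2.1, namely that in $B_n$ the relation $\sigma_i\sigma_{i+1}^{-1}\sigma_i=\sigma_{i+1}\sigma_i^{-1}\sigma_{i+1}$ is equivalent to $\sigma_i^2\sigma_{i+1}^2=\sigma_{i+1}^2\sigma_i^2$. Your proof reduces the categorical statement to exactly this equivalence (after the routine checks of (str1) and (str2), which indeed hold for the double braiding in any braided category), so the comparison is really between your computation and the one in Proposition~2.1.

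There the approaches diverge slightly. The paper treats the two implications separately, each by a short chain of rewritings using (\ref{braid2}). Your argument is more symmetric: you factor $\sigma_1^2\sigma_2^2=\Delta\cdot(\sigma_2\sigma_1^{-1}\sigma_2)$ and $\sigma_2^2\sigma_1^2=\Delta\cdot(\sigma_1\sigma_2^{-1}\sigma_1)$ with $\Delta=\sigma_1\sigma_2\sigma_1$, and then cancel the invertible $\Delta$ to obtain both directions at once. This is a cleaner packaging of the same content; it also makes transparent why the equivalence is an ``if and only if'' rather than requiring two separate verifications. Your caveat about the braid-group shorthand is appropriate: since the objects permute as one composes crossings, each $\sigma_i^{\pm 1}$ stands for a different instance of $c$ or $c^{-1}$, and one is implicitly invoking the standard fact that the braid relations (and their consequences) hold for these instances by naturality and the hexagon axioms.
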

\section{Defining relations for $PS_n$} 
\setcounter{equation}{0}
${\;\;\;\;}$
Let $n\geq 3$ be a natural number. We denote by $B_n$ the braid group on 
$n$ strands, with its usual presentation: generators $\sigma _i$, with 
$1\leq i\leq n-1$, and relations 
\begin{eqnarray}
&&\sigma _i\sigma _j=\sigma _j\sigma _i, \;\;\;if\;\vert i-j\vert \geq 2, 
\label{braid1} \\
&&\sigma _i\sigma _{i+1}\sigma _i=\sigma _{i+1}\sigma _i\sigma _{i+1}, 
\;\;\;if\;1\leq i\leq n-2. \label{braid2}
\end{eqnarray}

We begin with the analogue for braids of Proposition \ref{carpseudosym}:
\begin{proposition}
For all $1\leq i\leq n-2$, the following relations are equivalent in $B_n$: 
\begin{eqnarray}
&&\sigma _i\sigma _{i+1}^{-1}\sigma _i=\sigma _{i+1}\sigma _i^{-1}
\sigma _{i+1}, \label{pseudosym} \\
&&\sigma _i^2\sigma _{i+1}^2=\sigma _{i+1}^2\sigma _i^2. \label{pseudoequiv} 
\end{eqnarray}
\end{proposition}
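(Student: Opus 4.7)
My plan is to fix $i$ and work inside the subgroup of $B_n$ generated by $a := \sigma_i$ and $b := \sigma_{i+1}$, where the only available relation is the single braid relation $aba = bab$. In this notation \eqref{pseudosym} reads $ab^{-1}a = ba^{-1}b$ and \eqref{pseudoequiv} reads $a^2b^2 = b^2a^2$, so the task reduces to showing that these are equivalent consequences of $aba = bab$; in particular no relation between distant generators of $B_n$ will be needed.

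The strategy is to rewrite both sides of $a^2b^2 = b^2a^2$ so that a common right factor appears and can be cancelled to yield exactly $ab^{-1}a = ba^{-1}b$. The technical core is the auxiliary identity
\[
a^2b^2 = (ab^{-1}a)(bab),
\]
which holds in $B_n$ as a direct consequence of the braid relation: stripping $(bab)$ on the right reduces it to $b^{-1}ab = aba^{-1}$, itself a mere rearrangement of $aba = bab$. Exchanging the roles of $a$ and $b$ yields the symmetric identity $b^2a^2 = (ba^{-1}b)(aba)$, and replacing the trailing $aba$ by $bab$ gives $b^2a^2 = (ba^{-1}b)(bab)$, with the same right factor.

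Once these two identities are in hand, the equivalence is immediate by cancellation:
\[
a^2b^2 = b^2a^2 \;\Longleftrightarrow\; (ab^{-1}a)(bab) = (ba^{-1}b)(bab) \;\Longleftrightarrow\; ab^{-1}a = ba^{-1}b.
\]
I expect the only real obstacle to be spotting the auxiliary identity $a^2b^2 = (ab^{-1}a)(bab)$; once one is willing to insert a factor $bb^{-1}$ into the word $aabb$ and use the braid relation to reshuffle the middle, the identity appears naturally, and the remainder of the argument is pure substitution. The whole proof is manifestly symmetric in $a$ and $b$, which is exactly what one should expect since the relation to be proved is itself symmetric under swapping $\sigma_i \leftrightarrow \sigma_{i+1}$.
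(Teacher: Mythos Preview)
Your proof is correct. The key identity $a^2b^2=(ab^{-1}a)(bab)$ and its mirror $b^2a^2=(ba^{-1}b)(bab)$ are valid consequences of the braid relation, and right cancellation of the common factor $bab$ gives the equivalence in one stroke.

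The paper's argument for $(\ref{pseudosym})\Rightarrow(\ref{pseudoequiv})$ is in fact the same computation in disguise: inserting $b^{-1}b$ into $a^2b^2$ and applying the braid relation produces exactly $(ab^{-1}a)(bab)$, then $(\ref{pseudosym})$ swaps the first block to $(ba^{-1}b)(bab)$, and two more braid moves collapse this to $b^2a^2$. Where you differ is in the converse: the paper proves $(\ref{pseudoequiv})\Rightarrow(\ref{pseudosym})$ by a separate, longer chain of insertions and braid moves, whereas your packaging via the two auxiliary identities makes the equivalence symmetric, so both implications follow simultaneously by cancellation of $bab$. Your route is slightly more economical and makes the $a\leftrightarrow b$ symmetry manifest; the paper's route is more pedestrian but equally valid.
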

\begin{proof}
We show first that (\ref{pseudosym}) implies (\ref{pseudoequiv}): 
\begin{eqnarray*}
\sigma _i^2\sigma _{i+1}^2&=&\sigma _i\sigma _{i+1}^{-1}\sigma _{i+1}
\sigma _i\sigma _{i+1}\sigma _{i+1}\\
&\overset{(\ref{braid2})}{=}&\sigma _i\sigma _{i+1}^{-1}\sigma _i
\sigma _{i+1}\sigma _i\sigma _{i+1}\\ 
&\overset{(\ref{braid2}),\;(\ref{pseudosym})}{=}&
\sigma _{i+1}\sigma _i^{-1}\sigma _{i+1}
\sigma _i\sigma _{i+1}\sigma _i\\
&\overset{(\ref{braid2})}{=}&
\sigma _{i+1}\sigma _i^{-1}\sigma _i
\sigma _{i+1}\sigma _i\sigma _i\\
&=&\sigma _{i+1}^2\sigma _i^2.
\end{eqnarray*}
Conversely, we prove that (\ref{pseudoequiv}) implies (\ref{pseudosym}): 
\begin{eqnarray*}
\sigma _i\sigma _{i+1}^{-1}\sigma _i&=&\sigma _i\sigma _{i+1}^{-2}
\sigma _i^{-1}\sigma _i\sigma _{i+1}\sigma _i\\
&\overset{(\ref{braid2})}{=}&\sigma _i\sigma _{i+1}^{-2}
\sigma _i^{-1}\sigma _{i+1}\sigma _i\sigma _{i+1}\\
&=&\sigma _i\sigma _{i+1}^{-2}
\sigma _i^{-2}\sigma _i\sigma _{i+1}\sigma _i\sigma _{i+1}\\
&\overset{(\ref{braid2}),\;(\ref{pseudoequiv})}{=}&\sigma _i
\sigma _i^{-2}\sigma _{i+1}^{-2}\sigma _{i+1}\sigma _i\sigma _{i+1}^2\\
&=&\sigma _i^{-1}\sigma _{i+1}^{-1}\sigma _i\sigma _{i+1}^2\\
&=&\sigma _{i+1}\sigma _{i+1}^{-1}\sigma _i^{-1}\sigma _{i+1}^{-1}
\sigma _i\sigma _{i+1}^2\\
&\overset{(\ref{braid2})}{=}&\sigma _{i+1}
\sigma _i^{-1}\sigma _{i+1}^{-1}\sigma _i^{-1}
\sigma _i\sigma _{i+1}^2\\
&=&\sigma _{i+1}\sigma _i^{-1}\sigma _{i+1},
\end{eqnarray*} 
finishing the proof. 
\end{proof}
\begin{definition}
For a natural number $n\geq 3$, we define the {\bf pseudosymmetric group} 
$PS_n$ as the group with generators $\sigma _i$, with 
$1\leq i\leq n-1$, and relations (\ref{braid1}), (\ref{braid2}) and 
(\ref{pseudosym}), or equivalently (\ref{braid1}), (\ref{braid2}) and 
(\ref{pseudoequiv}). 
\end{definition}
\begin{proposition}
For $1\leq i\leq n-2$ consider the following elements in $PS_n$:
\begin{eqnarray}
&&p_i:=\sigma _i\sigma _{i+1}^{-1}, \;\;\;q_i:=\sigma _i^{-1}\sigma _{i+1}. 
\label{psiq}
\end{eqnarray}
Then the following relations hold in $PS_n$:
\begin{eqnarray}
&&p_i^3=q_i^3=(p_iq_i)^3=1, \;\;\;\forall \;\;1\leq i\leq n-2.
\end{eqnarray}
\end{proposition}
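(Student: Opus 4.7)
I will verify each of the three relations by direct computation, abbreviating $a = \sigma_i$ and $b = \sigma_{i+1}$. The relations available in $PS_n$ that involve only $a$ and $b$ are the braid relation $aba = bab$, the pseudosymmetric relation $ab^{-1}a = ba^{-1}b$ (whose inverse reads $a^{-1}ba^{-1} = b^{-1}ab^{-1}$), and the equivalent commutation relation $a^{2}b^{2} = b^{2}a^{2}$. I will also use the two standard consequences $bab^{-1} = a^{-1}ba$ and $aba^{-1} = b^{-1}ab$ of the braid relation, both obtained by direct rearrangement of $aba = bab$.

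The first two identities, $p_i^3 = 1$ and $q_i^3 = 1$, should each follow from a single substitution. Expanding $p_i^3 = (ab^{-1})^3 = ab^{-1}\,ab^{-1}\,ab^{-1}$ exposes the substring $ab^{-1}a$ in the middle; replacing it by $ba^{-1}b$ via the pseudosymmetric relation produces immediate cancellation to the identity. The argument for $q_i^3 = (a^{-1}b)^3$ is entirely symmetric, using instead the inverted form $a^{-1}ba^{-1} = b^{-1}ab^{-1}$.

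The identity $(p_iq_i)^3 = 1$ is the substantive step, since it involves a word of length $12$ and no single substitution collapses it. My plan is a three-step rewriting of
$(p_iq_i)^3 = ab^{-1}a^{-1}bab^{-1}a^{-1}bab^{-1}a^{-1}b$: first, apply $bab^{-1} = a^{-1}ba$ to the first internal $bab^{-1}$ substring, which after a cancellation introduces the adjacent block $a^{-2}b^{2}$; then use the commutation $a^{2}b^{2} = b^{2}a^{2}$ to swap this block, after which two obvious cancellations reduce the word to $aba^{-1}b^{-1}a^{-1}b$; finally apply $aba^{-1} = b^{-1}ab$ and cancel to obtain $1$.

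The main obstacle is spotting the correct sequence of rewrites for $(p_iq_i)^3$; unlike the cases of $p_i^3$ and $q_i^3$, one does not see a one-step substitution. The strategic principle, however, is clear: since the pseudosymmetric relation is equivalent to the commutation of squared generators, one must manufacture the squares $a^{\pm 2},\,b^{\pm 2}$ inside the word in order to exploit that commutation, and this is achieved by applying the braid relation in the forms $bab^{-1} = a^{-1}ba$ and $aba^{-1} = b^{-1}ab$ at the two ends of the reduction.
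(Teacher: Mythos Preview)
Your proof is correct and follows essentially the same strategy as the paper's: the relations $p_i^3=1$ and $q_i^3=1$ are dispatched by a single application of the pseudosymmetric relation, and for $(p_iq_i)^3=1$ both arguments use a braid-relation rewrite to produce adjacent squares, invoke the commutation $a^2b^2=b^2a^2$, and finish with another braid rewrite. The only cosmetic difference is that the paper computes $(p_iq_i)^2$ and identifies it with $(p_iq_i)^{-1}=\sigma_{i+1}^{-1}\sigma_i\sigma_{i+1}\sigma_i^{-1}$, whereas you expand $(p_iq_i)^3$ directly and reduce it to $1$; the sequence of rewrites is otherwise the same.
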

\begin{proof}
The relations $p_i^3=1$ and $q_i^3=1$ follow immediately 
from (\ref{pseudosym}), actually each of them is equivalent to 
(\ref{pseudosym}). Now we compute:   
\begin{eqnarray*}
(p_iq_i)^2&=&(\sigma _i\sigma _{i+1}^{-1}\sigma _i^{-1}\sigma _{i+1})^2\\
&=&\sigma _i\sigma _{i+1}^{-1}\sigma _i^{-1}\sigma _{i+1}
\sigma _i\sigma _{i+1}^{-1}\sigma _i^{-1}\sigma _{i+1}\\
&=&\sigma _i\sigma _{i+1}^{-1}\sigma _i^{-1}\sigma _{i+1}
\sigma _i\sigma _{i+1}\sigma _{i+1}^{-2}\sigma _i^{-1}\sigma _{i+1}\\
&\overset{(\ref{braid2})}{=}&\sigma _i^2\sigma _{i+1}^{-2}\sigma _i^{-1}
\sigma _{i+1}\\
&\overset{(\ref{pseudoequiv})}{=}&\sigma _{i+1}^{-2}\sigma _i\sigma _{i+1}\\
&=&\sigma _{i+1}^{-2}\sigma _i\sigma _{i+1}\sigma _i\sigma _i^{-1}\\
&\overset{(\ref{braid2})}{=}&\sigma _{i+1}^{-1}\sigma _i\sigma _{i+1}
\sigma _i^{-1}\\
&=&(p_iq_i)^{-1},
\end{eqnarray*}
and so we have obtained $(p_iq_i)^3=1$. 
\end{proof} 

Consider now the symmetric group $S_n$ with its usual presentation: 
generators $s_i$, with  
$1\leq i\leq n-1$, and relations (\ref{braid1}), (\ref{braid2}) and 
$s_i^2=1$, for all $1\leq i\leq n-1$. We denote by $\pi :B_n\rightarrow 
S_n$, $\beta :B_n\rightarrow PS_n$, $\alpha :PS_n\rightarrow S_n$, the 
canonical surjective group homomorphisms given by $\pi (\sigma _i)=s_i$, 
$\alpha (\sigma _i)=s_i$, $\beta (\sigma _i)=\sigma _i$, for all 
$1\leq i\leq n-1$. Obviously we have $\pi =\alpha \circ \beta $, hence in 
particular we obtain $Ker (\alpha )=\beta (Ker (\pi ))$. We denote as usual 
$Ker (\pi) =P_n$, the pure braid group on $n$ strands. It is well-known that  
$P_n$ is generated either by the elements   
\begin{eqnarray}
&&a_{ij}:=\sigma_{j-1}\sigma_{j-2}\cdots \sigma_{i+1}\sigma_i^2
\sigma_{i+1}^{-1}\cdots \sigma_{j-2}^{-1}\sigma_{j-1}^{-1}, \;\;\;
1\leq i<j\leq n, \label{aij}
\end{eqnarray}
or by the elements 
\begin{eqnarray}
&&b_{ij}:=\sigma_{j-1}^{-1}\sigma_{j-2}^{-1}\cdots \sigma_{i+1}^{-1}
\sigma_i^2\sigma_{i+1}\cdots \sigma_{j-2}\sigma_{j-1}, 
\;\;\;1\leq i<j\leq n. \label{bij}
\end{eqnarray}
It is easy to see that in $B_n$ we have 
\begin{eqnarray}
&&\sigma _{i+1}\sigma _i^2\sigma _{i+1}^{-1}=\sigma _i^{-1}\sigma _{i+1}^2
\sigma _i, \label{cucu1} \\
&&\sigma _{i+1}^{-1}\sigma _i^2\sigma _{i+1}=\sigma _i\sigma _{i+1}^2
\sigma _i^{-1}, \label{cucu2}
\end{eqnarray}
and by using repeatedly these relations we obtain the following equivalent 
descriptions of the elements $a_{ij}$ and $b_{ij}$:
\begin{eqnarray}
&&a_{ij}=\sigma_i^{-1}\sigma _{i+1}^{-1}\cdots \sigma_{j-2}^{-1}
\sigma_{j-1}^2\sigma_{j-2}\cdots \sigma _{i+1}\sigma_i, \;\;\;
1\leq i<j\leq n,  \label{aijequiv} \\
&&b_{ij}=\sigma_i\sigma_{i+1}\cdots \sigma_{j-2}
\sigma_{j-1}^2\sigma_{j-2}^{-1}\cdots \sigma _{i+1}^{-1}\sigma_i^{-1}, 
\;\;\;1\leq i<j\leq n.  \label{bijequiv}
\end{eqnarray}

Now, for all $1\leq i<j\leq n$, we define $A_{i, j}$ and $B_{i, j}$ as the 
elements in $PS_n$ given by $A_{i, j}:=\beta (a_{ij})$ and 
$B_{i, j}:=\beta (b_{ij})$. From the above discussion it follows that 
$Ker (\alpha )$ is generated by $\{A_{i, j}\}_{1\leq i<j\leq n}$ and also 
by $\{B_{i, j}\}_{1\leq i<j\leq n}$. 
\begin{lemma}
The following relations hold in $PS_n$, for $1\leq i<j<n$: 
\begin{eqnarray}
&&A_{i, j+1}=\sigma _jA_{i, j}\sigma _j^{-1}, \label{slab1} \\
&&B_{i, j+1}=\sigma _j^{-1}B_{i, j}\sigma _j. \label{slab2}
\end{eqnarray}
\end{lemma}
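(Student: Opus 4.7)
The relations in this lemma are in fact visible directly from the defining words (\ref{aij}) and (\ref{bij}) in $B_n$, before any pseudosymmetry is used; the plan is therefore to prove the corresponding identities in $B_n$ and then push them down to $PS_n$ via the homomorphism $\beta$.

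To establish $a_{i,j+1} = \sigma_j a_{i,j}\sigma_j^{-1}$ in $B_n$, I would simply write out the definition of $a_{i,j+1}$ from (\ref{aij}), namely
\begin{eqnarray*}
a_{i,j+1} &=& \sigma_j \sigma_{j-1} \cdots \sigma_{i+1}\sigma_i^2 \sigma_{i+1}^{-1} \cdots \sigma_{j-1}^{-1} \sigma_j^{-1},
\end{eqnarray*}
and observe that the inner block $\sigma_{j-1}\cdots\sigma_{i+1}\sigma_i^2\sigma_{i+1}^{-1}\cdots\sigma_{j-1}^{-1}$ is, by definition, $a_{i,j}$. Thus the identity holds tautologically in $B_n$. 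Applying the surjective homomorphism $\beta:B_n\rightarrow PS_n$ and using $A_{i,j}=\beta(a_{i,j})$ yields (\ref{slab1}).

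The second identity is proved identically: reading off (\ref{bij}) gives
\begin{eqnarray*}
b_{i,j+1} &=& \sigma_j^{-1} \sigma_{j-1}^{-1}\cdots \sigma_{i+1}^{-1}\sigma_i^2\sigma_{i+1}\cdots \sigma_{j-1}\sigma_j,
\end{eqnarray*}
whose central factor is $b_{i,j}$, so $b_{i,j+1}=\sigma_j^{-1}b_{i,j}\sigma_j$ in $B_n$; applying $\beta$ delivers (\ref{slab2}). There is no real obstacle here, since nothing beyond rereading the telescoping definitions is required — the lemma is essentially a bookkeeping remark recording the recursive structure of the generators $a_{ij}$ and $b_{ij}$, which will presumably be used in subsequent inductive arguments to analyze $\mathrm{Ker}(\alpha)$.
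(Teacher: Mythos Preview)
Your proof is correct and follows exactly the paper's approach: the paper remarks that these identities are consequences of the corresponding relations for $a_{ij}$ and $b_{ij}$ in $B_n$, which follow immediately from the defining formulae (\ref{aij}) and (\ref{bij}), and you have spelled this out explicitly.
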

\begin{proof}
These relations are actually consequences of corresponding relations in 
$B_n$ for $a_{ij}$'s and $b_{ij}$'s, which in turn follow immediately by 
using the formulae (\ref{aij}) and (\ref{bij}).   
\end{proof}
\begin{lemma}
The following relations hold in $PS_n$, for all $i, j\in \{1, 2, ..., n\}$ 
with $i+1<j$: 
\begin{eqnarray}
&&A_{i, j}=\sigma _iA_{i+1, j}\sigma _i^{-1}, \label{ajutaij} \\
&&B_{i, j}=\sigma _i^{-1}B_{i+1, j}\sigma _i. \label{ajutbij}
\end{eqnarray}
\end{lemma}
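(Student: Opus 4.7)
The strategy is to expand the right-hand sides using the explicit formulae (\ref{aij}), (\ref{bij}), push the outer $\sigma_i^{\pm 1}$ inwards by the far-commutation relation (\ref{braid1}), and then collapse the remaining length-one conjugation at the centre using a consequence of (\ref{pseudoequiv}). The key preparatory observation is that in $PS_n$ (as opposed to $B_n$) one has
\begin{eqnarray*}
\sigma_i\sigma_{i+1}^2\sigma_i^{-1}=\sigma_{i+1}\sigma_i^2\sigma_{i+1}^{-1},
\end{eqnarray*}
because the $B_n$-relation (\ref{cucu1}) rewrites the right-hand side as $\sigma_i^{-1}\sigma_{i+1}^2\sigma_i$, and then (\ref{pseudoequiv}) in the form $\sigma_i^{-2}\sigma_{i+1}^2=\sigma_{i+1}^2\sigma_i^{-2}$ identifies $\sigma_i^{-1}\sigma_{i+1}^2\sigma_i$ with $\sigma_i\sigma_{i+1}^2\sigma_i^{-1}$. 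Analogously (or by taking inverses), the ``dual'' identity $\sigma_i^{-1}\sigma_{i+1}^2\sigma_i=\sigma_{i+1}^{-1}\sigma_i^2\sigma_{i+1}$ holds in $PS_n$; this is what will handle the $B_{i,j}$ case via (\ref{cucu2}).

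For (\ref{ajutaij}), I would start from
\begin{eqnarray*}
\sigma_i A_{i+1,j}\sigma_i^{-1}=\sigma_i\bigl(\sigma_{j-1}\sigma_{j-2}\cdots\sigma_{i+2}\,\sigma_{i+1}^2\,\sigma_{i+2}^{-1}\cdots\sigma_{j-2}^{-1}\sigma_{j-1}^{-1}\bigr)\sigma_i^{-1},
\end{eqnarray*}
commute $\sigma_i$ past $\sigma_{j-1},\dots,\sigma_{i+2}$ on the left (and $\sigma_i^{-1}$ past $\sigma_{i+2}^{-1},\dots,\sigma_{j-1}^{-1}$ on the right) using (\ref{braid1}), thereby bringing the expression to the form
\begin{eqnarray*}
\sigma_{j-1}\cdots\sigma_{i+2}\bigl(\sigma_i\sigma_{i+1}^2\sigma_i^{-1}\bigr)\sigma_{i+2}^{-1}\cdots\sigma_{j-1}^{-1}.
\end{eqnarray*}
Applying the preparatory identity to the parenthesis converts this into $\sigma_{j-1}\cdots\sigma_{i+1}\sigma_i^2\sigma_{i+1}^{-1}\cdots\sigma_{j-1}^{-1}=A_{i,j}$, which is exactly the required statement.

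For (\ref{ajutbij}) the same template applies: expand $\sigma_i^{-1}B_{i+1,j}\sigma_i$ from the definition (\ref{bij}) of $b_{i+1,j}$, commute $\sigma_i^{-1}$ and $\sigma_i$ past every $\sigma_{k}^{\pm 1}$ with $k\geq i+2$ by (\ref{braid1}), and then replace the central block $\sigma_i^{-1}\sigma_{i+1}^2\sigma_i$ by $\sigma_{i+1}^{-1}\sigma_i^2\sigma_{i+1}$ using the ``dual'' pseudosymmetric identity noted above. The resulting word is precisely $b_{i,j}$, hence $B_{i,j}$ in $PS_n$.

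The only non-trivial step is the central rewrite, since all the other manipulations are pure far-commutations; this is where the pseudosymmetric relation (\ref{pseudoequiv}), which fails in $B_n$, is essential. Once that identity is isolated the rest of the proof is automatic, so I do not expect any real obstacle beyond bookkeeping of indices.
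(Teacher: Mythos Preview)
Your proposal is correct and follows essentially the same approach as the paper: isolate the central identity $\sigma_i\sigma_{i+1}^2\sigma_i^{-1}=\sigma_{i+1}\sigma_i^2\sigma_{i+1}^{-1}$ in $PS_n$ (the paper derives it from (\ref{cucu2}) combined with $\sigma_{i+1}^{-1}\sigma_i^2\sigma_{i+1}=\sigma_{i+1}\sigma_i^2\sigma_{i+1}^{-1}$, you from (\ref{cucu1}) combined with (\ref{pseudoequiv}), which is the same thing), and then use far-commutation (\ref{braid1}) to move $\sigma_i^{\pm 1}$ past the outer factors. The only cosmetic difference is that the paper runs the computation from $A_{i,j}$ towards $\sigma_iA_{i+1,j}\sigma_i^{-1}$ rather than the reverse, and leaves (\ref{ajutbij}) to the reader exactly as you outlined.
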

\begin{proof}
We prove (\ref{ajutaij}), while (\ref{ajutbij}) is similar and left to the 
reader. Note that in $PS_n$ we have    
$\sigma_{i+1}^{-1}\sigma_i^2\sigma_{i+1}=
\sigma_{i+1}\sigma_i^2\sigma_{i+1}^{-1}$, 
which together with (\ref{cucu2}) implies 
$\sigma_{i}\sigma_{i+1}^2\sigma_{i}^{-1}=
\sigma_{i+1}\sigma_i^2\sigma_{i+1}^{-1}$, 
hence 
\begin{eqnarray*}
A_{i,j}&=&\sigma_{j-1}\sigma_{j-2}\cdots 
(\sigma_{i+1}\sigma_i^2\sigma_{i+1}^{-1})\cdots 
\sigma_{j-2}^{-1}\sigma_{j-1}^{-1}\\
&=&\sigma_{j-1}\sigma_{j-2}\cdots 
(\sigma_{i}\sigma_{i+1}^2\sigma_{i}^{-1})\cdots 
\sigma_{j-2}^{-1}\sigma_{j-1}^{-1}\\
&=&\sigma _i\sigma_{j-1}\sigma_{j-2}\cdots 
\sigma_{i+1}^2\cdots  
\sigma_{j-2}^{-1}\sigma_{j-1}^{-1}\sigma _i^{-1}\\
&=&\sigma_iA_{i+1,j}\sigma_i^{-1},
\end{eqnarray*} 
finishing the proof.
\end{proof}
\begin{proposition}
For all $1\leq i<j\leq n$, we have $A_{i, j}=B_{i, j}$ in $PS_n$. 
\end{proposition}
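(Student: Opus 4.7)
The plan is to induct on $j-i$, keeping $i$ fixed. For the base case $j=i+1$, the outer products in (\ref{aij}) and (\ref{bij}) are empty, so $A_{i,i+1}=\sigma_i^2=B_{i,i+1}$ in $PS_n$.

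For the inductive step, suppose $A_{i,j}=B_{i,j}$ and aim to show $A_{i,j+1}=B_{i,j+1}$. By the previous lemma, $A_{i,j+1}=\sigma_j A_{i,j}\sigma_j^{-1}$ and $B_{i,j+1}=\sigma_j^{-1}B_{i,j}\sigma_j=\sigma_j^{-1}A_{i,j}\sigma_j$, where the second equality uses the induction hypothesis. So the required identity is equivalent to $\sigma_j A_{i,j}\sigma_j^{-1}=\sigma_j^{-1}A_{i,j}\sigma_j$, i.e.\ to the statement that $A_{i,j}$ commutes with $\sigma_j^2$ in $PS_n$.

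To establish this commutativity, I would use the equivalent form (\ref{aijequiv}) of $A_{i,j}$, whose decisive feature is that $\sigma_{j-1}$ appears there only as $\sigma_{j-1}^2$, flanked by $\sigma_k^{\pm 1}$ with $k\le j-2$. Each such $\sigma_k^{\pm 1}$ commutes with $\sigma_j^2$ by the braid relation (\ref{braid1}) (since $|j-k|\ge 2$), while $\sigma_{j-1}^2$ commutes with $\sigma_j^2$ by the defining pseudosymmetric relation (\ref{pseudoequiv}). Hence $\sigma_j^2$ commutes letter-by-letter with the expression (\ref{aijequiv}), and therefore with $A_{i,j}$, closing the induction.

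The only genuine insight required is to choose form (\ref{aijequiv}) rather than (\ref{aij}): in the original presentation the generator $\sigma_{j-1}$ occurs to the first power and $\sigma_j^2$ does not commute with $\sigma_{j-1}$ in $PS_n$, so the commutation of $\sigma_j^2$ with $A_{i,j}$ is not visible letter-by-letter. After moving to the form in which $\sigma_{j-1}$ appears squared, the pseudosymmetric relation applies directly and nothing further is needed.
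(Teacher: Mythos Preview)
Your proof is correct, but it follows a different recursion from the paper's. The paper iterates the relation (\ref{ajutaij}), which shifts the \emph{first} index upward, to rewrite
\[
A_{i,j}=\sigma_i\sigma_{i+1}\cdots\sigma_{j-2}\,A_{j-1,j}\,\sigma_{j-2}^{-1}\cdots\sigma_i^{-1}
=\sigma_i\cdots\sigma_{j-2}\,\sigma_{j-1}^2\,\sigma_{j-2}^{-1}\cdots\sigma_i^{-1},
\]
and then recognizes the right-hand side as $B_{i,j}$ via (\ref{bijequiv}). Thus the pseudosymmetric relation enters only through the lemma establishing (\ref{ajutaij}). You instead induct on the \emph{second} index using (\ref{slab1}) and (\ref{slab2}), which already hold in $B_n$, and reduce the inductive step to the commutation of $\sigma_j^2$ with $A_{i,j}$; this you read off directly from the form (\ref{aijequiv}) together with (\ref{pseudoequiv}). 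Your route is a bit more self-contained in that it never invokes the lemma (\ref{ajutaij})/(\ref{ajutbij}); the paper's route is shorter once that lemma is available, and the lemma is reused later in the paper anyway.
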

\begin{proof}
We use (\ref{ajutaij}) repeatedly:
\begin{eqnarray*}
A_{i,j}&=&\sigma _iA_{i+1, j}\sigma _i^{-1}\\
&=&\sigma _i\sigma _{i+1}A_{i+2, j}\sigma _{i+1}^{-1}\sigma _i^{-1}\\
&&\cdots \\
&=&\sigma _i\sigma _{i+1}\cdots \sigma _{j-2}A_{j-1, j}\sigma _{j-2}^{-1}
\cdots \sigma _{i+1}^{-1}\sigma _i^{-1}\\
&=&\sigma _i\sigma _{i+1}\cdots \sigma _{j-2}\sigma _{j-1}^2
\sigma _{j-2}^{-1}\cdots \sigma _{i+1}^{-1}\sigma _i^{-1}\\
&\overset{(\ref{bijequiv})}{=}&B_{i, j}, 
\end{eqnarray*}
finishing the proof.
\end{proof}
\begin{lemma} For all $1\leq i<j\leq n$ and $1\leq h\leq k<n$  
the following relations hold in $PS_n$: 
\begin{eqnarray}
&&A_{i,j}\sigma _i^2=\sigma _i^2A_{i,j},    \label{aijsigmai2} \\
&&A_{h, k+1}\sigma _k^2=\sigma _k^2A_{h, k+1}. \label{aikuri} 
\end{eqnarray}
\end{lemma}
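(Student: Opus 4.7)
The plan is to prove both relations by induction, reducing each inductive step to the defining relation (\ref{pseudoequiv}) via the conjugation recursions (\ref{slab1}) and (\ref{ajutaij}), and handling the small base cases by a direct short calculation.

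For (\ref{aijsigmai2}), I would induct on $j-i$. The case $j=i+1$ is trivial because $A_{i,i+1}=\sigma_i^2$. The genuine base case is $j=i+2$: using the equivalent expression $A_{i,i+2}=\sigma_i^{-1}\sigma_{i+1}^2\sigma_i$ coming from (\ref{aijequiv}), the desired identity $\sigma_i^2\,A_{i,i+2}=A_{i,i+2}\,\sigma_i^2$ collapses, after cancelling the outer factors of $\sigma_i^{\pm 1}$, to $\sigma_i^2\sigma_{i+1}^2=\sigma_{i+1}^2\sigma_i^2$, which is exactly (\ref{pseudoequiv}). For the inductive step $j-i\ge 3$, I would invoke (\ref{slab1}) to write $A_{i,j}=\sigma_{j-1}\,A_{i,j-1}\,\sigma_{j-1}^{-1}$; since $j-1\ge i+2$, relation (\ref{braid1}) makes $\sigma_{j-1}$ commute with $\sigma_i^2$, so the commutation of $A_{i,j}$ with $\sigma_i^2$ follows from the inductive hypothesis by sliding $\sigma_i^2$ past the conjugating $\sigma_{j-1}^{\pm 1}$.

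For (\ref{aikuri}) the argument is symmetric, inducting on $k-h$. The case $h=k$ is trivial because $A_{k,k+1}=\sigma_k^2$. For the base case $h=k-1$, using $A_{k-1,k+1}=\sigma_k\sigma_{k-1}^2\sigma_k^{-1}$ from (\ref{aij}) I would compute
$$\sigma_k^2\,A_{k-1,k+1}=\sigma_k(\sigma_k^2\sigma_{k-1}^2)\sigma_k^{-1}=\sigma_k(\sigma_{k-1}^2\sigma_k^2)\sigma_k^{-1}=A_{k-1,k+1}\,\sigma_k^2,$$
where the middle step is again (\ref{pseudoequiv}). For $h\le k-2$, relation (\ref{ajutaij}) gives $A_{h,k+1}=\sigma_h\,A_{h+1,k+1}\,\sigma_h^{-1}$, and since $h\le k-2$, (\ref{braid1}) makes $\sigma_h$ commute with $\sigma_k^2$; hence commutation of $A_{h,k+1}$ with $\sigma_k^2$ drops to the inductive hypothesis applied to $A_{h+1,k+1}$.

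I do not anticipate any real obstacle. The only content of the proof lies in the two nearly identical base-case computations, both of which boil down to a single use of (\ref{pseudoequiv}). The rest is bookkeeping; the one point to check is that at each inductive step the conjugating generator's index is at distance at least $2$ from the fixed index, which is precisely what allows (\ref{braid1}) to be applied.
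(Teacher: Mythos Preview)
Your proof is correct, but it follows a different route from the paper's. The paper does not use induction at all: instead it exploits the equality $A_{r,s}=B_{r,s}$ established in the preceding proposition. For (\ref{aijsigmai2}) with $j>i+1$ it writes
\[
A_{i,j}\sigma_i^2 \overset{(\ref{ajutaij})}{=}\sigma_i A_{i+1,j}\sigma_i=\sigma_i B_{i+1,j}\sigma_i\overset{(\ref{ajutbij})}{=}\sigma_i^2 B_{i,j}=\sigma_i^2 A_{i,j},
\]
and treats (\ref{aikuri}) symmetrically via (\ref{slab1}) and (\ref{slab2}). The point is that the $A$-recursion conjugates by $\sigma$ on the left while the $B$-recursion conjugates by $\sigma^{-1}$ on the left, so switching from $A$ to $B$ midstream converts $A\sigma^2$ directly into $\sigma^2 A$ in a single step. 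Your argument, by contrast, avoids the $A=B$ proposition entirely: you stay on the $A$-side, peel off conjugating generators using (\ref{slab1}) or (\ref{ajutaij}), and push $\sigma_i^2$ (resp.\ $\sigma_k^2$) past them using (\ref{braid1}) until you hit the base case where (\ref{pseudoequiv}) applies directly. Your approach is slightly longer but more self-contained, needing only (\ref{braid1}), (\ref{pseudoequiv}), (\ref{slab1}) and (\ref{ajutaij}); the paper's one-liner is slicker but depends on having already proved $A_{i,j}=B_{i,j}$.
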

\begin{proof}
Note first that (\ref{aijsigmai2}) is obvious for $j=i+1$. Assume that 
$i+1<j$;   
using the fact that $A_{r, s}=B_{r, s}$ for all $r, s$ we compute:
\begin{eqnarray*}
A_{i,j}\sigma _i^2&\overset{(\ref{ajutaij})}{=}&\sigma _iA_{i+1, j}
\sigma _i\\
&=&\sigma _iB_{i+1,j}\sigma _i\\
&\overset{(\ref{ajutbij})}{=}&\sigma _i^2B_{i,j}\\
&=&\sigma _i^2A_{i, j}. 
\end{eqnarray*}
Note also that (\ref{aikuri}) is obvious for $h=k$. Assume that $h<k$; 
using again $A_{r, s}=B_{r, s}$ for all $r, s$ we compute:
\begin{eqnarray*}
A_{h, k+1}\sigma _k^2&\overset{(\ref{slab1})}{=}&\sigma _kA_{h, k}\sigma _k\\
&=&\sigma _kB_{h, k}\sigma _k\\
&\overset{(\ref{slab2})}{=}&\sigma _k^2B_{h, k+1}\\
&=&\sigma _k^2A_{h, k+1}, 
\end{eqnarray*}
finishing the proof.
\end{proof}
\section{The structure of $PS_n$} 
\setcounter{equation}{0}
${\;\;\;\;}$
We denote by $\mathfrak{P}_n$ the kernel of the morphism 
$\alpha :PS_n\rightarrow S_n$ defined above.  
\begin{proposition} \label{abel}
$\mathfrak{P}_n$ is an abelian group. 
\end{proposition}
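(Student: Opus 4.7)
The plan is to prove pairwise commutativity of the generators $\{A_{i,j}:1\le i<j\le n\}$ of $\mathfrak{P}_n$, which suffices. I will pass through two intermediate claims.

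\textbf{Claim 1.} Each $A_{i,j}$ commutes with each $\sigma_k^2$. The cases $k\in\{i,j-1\}$ are precisely (\ref{aijsigmai2}) and (\ref{aikuri}). The cases $k=i-1$ and $k=j$ follow from the preceding proposition $A_{i,j}=B_{i,j}$: equating $A_{i,j}=\sigma_i A_{i+1,j}\sigma_i^{-1}$ from (\ref{ajutaij}) with $A_{i,j}=B_{i,j}=\sigma_i^{-1}A_{i+1,j}\sigma_i$ from (\ref{ajutbij}) gives $[\sigma_i^2,A_{i+1,j}]=1$, which (reindexed) yields $k=i-1$; the analogous use of (\ref{slab1}), (\ref{slab2}) treats $k=j$. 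For $k\le i-2$ or $k\ge j+1$, the commutation with $\sigma_k^2$ is immediate from (\ref{braid1}). The interior range $i+1\le k\le j-2$ is handled by iterating (\ref{ajutaij}) to write $A_{i,j}=(\sigma_i\cdots\sigma_{k-1})A_{k,j}(\sigma_i\cdots\sigma_{k-1})^{-1}$; this reduces commutation with $\sigma_k^2$ to commutation of $A_{k,j}$ with $(\sigma_i\cdots\sigma_{k-1})^{-1}\sigma_k^2(\sigma_i\cdots\sigma_{k-1})=\sigma_k\sigma_{k-1}^2\sigma_k^{-1}$, which in turn follows by expanding $A_{k,j}=\sigma_kA_{k+1,j}\sigma_k^{-1}$ and using that $\sigma_{k-1}$ commutes with every generator appearing in the word for $A_{k+1,j}$.

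\textbf{Claim 2.} For every $m$, $\sigma_m A_{i,j}\sigma_m^{-1}=A_{\pi_m(i),\pi_m(j)}$ in $PS_n$, where $\pi_m$ is the transposition $(m,m+1)$ and one sets $A_{r,s}:=A_{s,r}$ for $r>s$. The four boundary cases $m\in\{i-1,i,j-1,j\}$ follow directly from the slab/ajut lemmas combined with $A_{r,s}=B_{r,s}$; for instance, $\sigma_{j-1}A_{i,j}\sigma_{j-1}^{-1}=A_{i,j-1}$ because $A_{i,j}=B_{i,j}=\sigma_{j-1}^{-1}A_{i,j-1}\sigma_{j-1}$ via (\ref{slab2}). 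For $m\le i-2$ or $m\ge j+1$, $\sigma_m$ commutes with every generator in the word for $A_{i,j}$ by (\ref{braid1}). The remaining interior range $i+1\le m\le j-2$ is a direct computation in $B_n$: pushing $\sigma_m$ through the defining word of $A_{i,j}$ using (\ref{braid2}) (once to cross $\sigma_{m+1}\sigma_m$ and once to cross $\sigma_m^{-1}\sigma_{m+1}^{-1}$) and commuting with $\sigma_i^2$ (trivially if $m\ge i+2$, otherwise via (\ref{cucu1}) when $m=i+1$), one sees $\sigma_m$ re-emerge on the other side and the word returns to $A_{i,j}$.

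\textbf{Conclusion.} Write $A_{k,l}=u\sigma_k^2 u^{-1}$ with $u=\sigma_{l-1}\sigma_{l-2}\cdots\sigma_{k+1}$. Iterated application of Claim 2 gives $u^{-1}A_{i,j}u=A_{i',j'}$ for some $1\le i'<j'\le n$, and by Claim 1 the element $\sigma_k^2$ commutes with $A_{i',j'}$. Hence
\[
A_{k,l}A_{i,j}A_{k,l}^{-1}=u\sigma_k^2(u^{-1}A_{i,j}u)\sigma_k^{-2}u^{-1}=uA_{i',j'}u^{-1}=A_{i,j},
\]
so $A_{k,l}$ and $A_{i,j}$ commute. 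The main obstacle is Claim 2 in the interior range, which demands careful bookkeeping with the braid relations; Claim 1 and the final assembly are essentially applications of the lemmas of Section~2.
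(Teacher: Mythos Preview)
Your argument is correct, but it takes a different route from the paper's. The paper argues by a seven-case analysis on the relative position of $\{i,j\}$ and $\{k,l\}$: the disjoint and nested configurations are inherited from commutation relations already holding in $P_n$ (for the linked case $i<k<j<l$ it invokes the geometric fact that $a_{ij}$ and $b_{kl}$ commute in $P_n$, together with $A_{k,l}=B_{k,l}$), while the overlapping cases $i=k<j<l$ and $i<k<j=l$ are reduced, via iterated use of (\ref{ajutaij}) and (\ref{slab1}), to the single commutations (\ref{aijsigmai2}) and (\ref{aikuri}). Your approach instead proves two uniform lemmas---every $A_{i,j}$ commutes with every $\sigma_k^2$, and conjugation by $\sigma_m$ permutes the $A$'s according to the transposition $(m,m+1)$---and then dispatches all cases at once by writing $A_{k,l}=u\sigma_k^2u^{-1}$. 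This is cleaner and avoids the case split; it also anticipates the conjugation formula that the paper establishes only \emph{after} Proposition~\ref{abel} (in the description of the $S_n$-action on $\mathfrak{P}_n$). The trade-off is that your Claims 1 and 2 themselves require some bookkeeping in the interior range, whereas the paper offloads the corresponding work onto known pure-braid relations. One small remark: in your sketch of Claim~2 for $i+1\le m\le j-2$, after the first braid move the letter that must pass $\sigma_i^2$ is $\sigma_{m+1}$, which commutes with $\sigma_i$ already by (\ref{braid1}) for every $m\ge i+1$, so the appeal to (\ref{cucu1}) is not actually needed there.
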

\begin{proof}
It is enough to prove that any two elements $A_{i, j}$ and $A_{k, l}$ 
commute in $PS_n$. We only have to analyze the following seven cases for the 
numbers $i, j, k, l$:\\[2mm]  
(1) $i<j<k<l$: this is trivial (comes from a relation in $P_n$).\\[2mm] 
(2) $i<j=k<l$: we write  
\begin{eqnarray*} 
&&A_{i,j}=\sigma_i^{-1}\sigma _{i+1}^{-1}\cdots \sigma_{j-2}^{-1}
\sigma_{j-1}^2\sigma_{j-2}\cdots \sigma _{i+1}\sigma_i, \\ 
&&A_{j,l}=\sigma_{l-1}\sigma_{l-2}\cdots \sigma_{j+1}\sigma_j^2
\sigma_{j+1}^{-1}\cdots \sigma_{l-2}^{-1}\sigma_{l-1}^{-1}, 
\end{eqnarray*}
and we obtain $A_{i, j}A_{j, l}=A_{j, l}A_{i, j}$ by using (\ref{braid1}) 
and the fact that $\sigma_{j-1}^2$ and $\sigma_j^2$ commute in $PS_n$.\\[2mm] 
(3) $i<k<j<l$: this follows since $A_{k, l}=B_{k, l}$ in $PS_n$, 
and $a_{ij}$ and $b_{kl}$ commute in $P_n$ if $i<k<j<l$ (this is easily seen 
geometrically). \\[2mm]
(4) $i=k<j=l$: this is trivial.\\[2mm]
(5) $i<k<l<j$: this is trivial (comes from a relation in $P_n$). \\[2mm]
(6) $i=k<j<l$: in case $j=i+1$, we have $A_{i, j}=\sigma _i^2$ and so we 
obtain $A_{i, j}A_{i, l}=A_{i, l}A_{i, j}$ by using (\ref{aijsigmai2}); 
assuming now $i+1<j$,  
by using repeatedly (\ref{ajutaij}) we can compute:  
\begin{eqnarray*}
A_{i,j}A_{i,l}&=&\sigma_iA_{i+1,j}A_{i+1,l}\sigma_i^{-1}\\
&=&\sigma _i\sigma _{i+1}A_{i+2, j}A_{i+2, l}\sigma _{i+1}^{-1}
\sigma _i^{-1}\\
&&\cdots \\
&=&\sigma_i\sigma_{i+1}\cdots \sigma_{j-2}A_{j-1,j}A_{j-1,l}
\sigma_{j-2}^{-1}\cdots \sigma_{i+1}^{-1}\sigma_i^{-1}, 
\end{eqnarray*}
and similarly 
\begin{eqnarray*}
&&A_{i, l}A_{i, j}=\sigma _i\sigma _{i+1}\cdots 
\sigma _{j-2}A_{j-1, l}A_{j-1, j}\sigma _{j-2}^{-1}\cdots 
\sigma _{i+1}^{-1}\sigma _i^{-1}, 
\end{eqnarray*}
and these are equal because 
$A_{j-1, j}=\sigma _{j-1}^2$ and by (\ref{aijsigmai2}) we have 
$\sigma _{j-1}^2A_{j-1, l}=A_{j-1, l}\sigma _{j-1}^2$.\\[2mm]  
(7) $i<k<j=l$: in case $j=k+1$, we have $A_{k, j}=\sigma _k^2$ and so we 
obtain $A_{i, j}A_{k, j}=A_{k, j}A_{i, j}$ by using (\ref{aikuri}); 
assuming now $k+1<j$,   
by using repeatedly (\ref{slab1}) we can compute:
\begin{eqnarray*}
A_{i, j}A_{k, j}&=&\sigma _{j-1}A_{i, j-1}A_{k, j-1}\sigma _{j-1}^{-1}\\
&=&\sigma _{j-1}\sigma _{j-2}A_{i, j-2}A_{k, j-2}\sigma _{j-2}^{-1}
\sigma _{j-1}^{-1}\\
&&\cdots \\
&=&\sigma _{j-1}\sigma _{j-2}\cdots \sigma _{k+1}A_{i, k+1}A_{k, k+1}
\sigma _{k+1}^{-1}\cdots \sigma _{j-2}^{-1}\sigma _{j-1}^{-1}, 
\end{eqnarray*}
and similarly 
\begin{eqnarray*}
&&A_{k, j}A_{i, j}=\sigma _{j-1}\sigma _{j-2}\cdots \sigma _{k+1}
A_{k, k+1}A_{i, k+1}\sigma _{k+1}^{-1}\cdots \sigma _{j-2}^{-1}
\sigma _{j-1}^{-1},
\end{eqnarray*}
and these are equal because $A_{k, k+1}=\sigma _k^2$ and by 
(\ref{aikuri}) we have $A_{i, k+1}\sigma _k^2=\sigma _k^2A_{i, k+1}$.   
\end{proof}

Let $G$ be a group. If $x, y\in G$ we denote by $[x, y]:=x^{-1}y^{-1}xy$ 
the commutator of $x$ and $y$, and by $G'$ the commutator subgroup of $G$ 
(the subgroup of $G$ generated by all commutators $[x, y]$), which is the  
smallest normal subgoup $N$ of $G$ with the property that $G/N$ is abelian. 
Moreover, $G'$ is a characteristic subgroup of $G$, i.e. 
$\theta (G')=G'$ for all $\theta \in Aut (G)$. 
\begin{proposition}
$\mathfrak{P}_n\simeq P_n/P_{n}'\simeq {\mathbb Z}^{\frac{n(n-1)}{2}}$.
\end{proposition}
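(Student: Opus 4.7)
The plan is to unpack the definition of $\mathfrak{P}_n$ as a quotient of $P_n$ and identify the relevant kernel with $P_n'$, then invoke the classical computation of the abelianization of the pure braid group.

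Let $N$ denote the kernel of $\beta : B_n \to PS_n$, so that $N$ is the normal closure in $B_n$ of the elements $[\sigma_i^2, \sigma_{i+1}^2]$, $1 \leq i \leq n-2$. Since $P_n$ is normal in $B_n$ and each $[\sigma_i^2, \sigma_{i+1}^2]$ lies in $P_n$, we have $N \subseteq P_n$, and consequently
\[
\mathfrak{P}_n = \beta(P_n) = P_n / N.
\]
The main step is to prove that $N = P_n'$. For the inclusion $P_n' \subseteq N$, one uses \prref{abel}: the quotient $P_n / N \simeq \mathfrak{P}_n$ is abelian, so $N$ contains the commutator subgroup $P_n'$. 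For the reverse inclusion, observe that $P_n'$ is a characteristic subgroup of $P_n$; since conjugation by any $\sigma \in B_n$ is an automorphism of $P_n$ (as $P_n \triangleleft B_n$), it preserves $P_n'$, so $P_n'$ is normal in $B_n$. As $[\sigma_i^2, \sigma_{i+1}^2] = [a_{i,i+1}, a_{i+1,i+2}] \in P_n'$, the normal closure $N$ is contained in $P_n'$.

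Combining the two inclusions yields
\[
\mathfrak{P}_n \simeq P_n / P_n',
\]
which is by definition the abelianization of the pure braid group. The second isomorphism $P_n / P_n' \simeq \mathbb{Z}^{n(n-1)/2}$ is a classical fact that I would simply quote: Artin's presentation of $P_n$ on the $\binom{n}{2}$ generators $a_{ij}$ has only commutator-type relations, so after abelianization no relations survive among the $a_{ij}$; equivalently, this follows from the iterated semidirect product decomposition $P_n \simeq P_{n-1} \ltimes F_{n-1}$ by a straightforward induction on $n$.

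There is no real obstacle in this argument beyond being careful about which group the various normalities take place in. The mildly subtle point is the fact that $P_n'$ is normal in $B_n$ (rather than just in $P_n$), which is what allows the characteristic-subgroup argument to close the loop; everything else is an application of \prref{abel} and of the standard computation $H_1(P_n) \simeq \mathbb{Z}^{n(n-1)/2}$.
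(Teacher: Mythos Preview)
Your proof is correct and follows essentially the same route as the paper: identify $\ker\beta$ as the normal closure in $B_n$ of the elements $[\sigma_i^2,\sigma_{i+1}^2]$, observe it lies in $P_n$, use Proposition~\ref{abel} for $P_n'\subseteq N$ and the characteristic-subgroup argument for $N\subseteq P_n'$, then quote the classical abelianization of $P_n$. The only difference is cosmetic (your $N$ is the paper's $L_n$), and you supply slightly more detail on why $P_n/P_n'\simeq\mathbb{Z}^{n(n-1)/2}$ than the paper does.
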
 
\begin{proof}
For $1\leq i\leq n-2$ we define the elements $t_i\in P_n$ by 
$t_i:=[\sigma _i^2, \sigma _{i+1}^2]=[a_{i, i+1}, a_{i+1, i+2}]$. These 
elements are the relators added to the ones of $B_n$ in order to obtain 
$PS_n$, and so (see \cite{coxeter}) the kernel of the map 
$\beta :B_n\rightarrow PS_n$ defined above coincides with the normal 
subgroup of $B_n$ generated by $\{t_i\}_{1\leq i\leq n-2}$, which will be 
denoted by $L_n$. We obviously have $L_n\subseteq P_n$, and if we 
consider the map $\beta $ restricted to $P_n$, we have a surjective 
morphism $P_n\rightarrow \mathfrak{P}_n$ with kernel $L_n$, so 
$\mathfrak{P}_n\simeq P_n/L_n$. By Proposition \ref{abel} we know that 
$\mathfrak{P}_n$ is abelian, so we obtain $P_n'\subseteq L_n$. On the 
other hand, since $P_n'$ is characteristic in $P_n$ and $P_n$ is 
normal in $B_n$, it follows (see \cite{suzuki}, Prop. 6.14) 
that $P_n'$ is normal in $B_n$, and since $t_1, \cdots , t_{n-2}\in P_n'$  
and $L_n$ is the normal subgroup of $B_n$ generated by 
$\{t_i\}_{1\leq i\leq n-2}$ we obtain $L_n\subseteq P_n'$. Thus, we 
have obtained $L_n=P_n'$ and so $\mathfrak{P}_n\simeq P_n/P_n'$. 
On the other hand, it is well-known that   
$P_n/P_{n}'\simeq {\mathbb Z}^{\frac{n(n-1)}{2}}$.      
\end{proof}

Note also that as a consequence of the equality $L_n=P_n'$ we obtain: 
\begin{corollary}
$PS_n\simeq B_n/P_n'$.  
\end{corollary}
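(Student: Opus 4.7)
The plan is to derive the corollary essentially for free from the preceding proposition, whose proof already contained all of the real work. The key observation is that the definition of $PS_n$ as a quotient of $B_n$ by the relations $\sigma_i\sigma_{i+1}^{-1}\sigma_i=\sigma_{i+1}\sigma_i^{-1}\sigma_{i+1}$ (equivalently, by the relations $[\sigma_i^2,\sigma_{i+1}^2]=1$) means, by the standard description of groups given by generators and relations, that the canonical surjection $\beta:B_n\twoheadrightarrow PS_n$ has kernel exactly the normal subgroup $L_n$ of $B_n$ generated by the elements $t_i=[\sigma_i^2,\sigma_{i+1}^2]$, $1\leq i\leq n-2$.

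Given this, I would simply invoke the first isomorphism theorem to conclude that $PS_n\simeq B_n/L_n$. Then I would cite the equality $L_n=P_n'$, which was established in the course of proving the previous proposition (this is the nontrivial content: the inclusion $P_n'\subseteq L_n$ comes from the abelianness of $\mathfrak{P}_n$ via \prref{abel}, and the reverse inclusion uses that $P_n'$ is characteristic in $P_n$ and $P_n$ is normal in $B_n$, so $P_n'$ is normal in $B_n$ and contains each $t_i$). Substituting $L_n=P_n'$ into the isomorphism $PS_n\simeq B_n/L_n$ yields $PS_n\simeq B_n/P_n'$, which is the desired statement.

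There is no real obstacle here; all of the difficulty was absorbed into the proposition. The corollary is a one-line bookkeeping consequence, and the only thing to be careful about is making explicit that $Ker(\beta)=L_n$ — which follows from the general fact that if $G=\langle S\mid R\rangle$ and $R'$ is an additional set of relators, then the group $\langle S\mid R\cup R'\rangle$ is isomorphic to $G$ modulo the normal closure of $R'$ in $G$.
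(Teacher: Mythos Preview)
Your proposal is correct and follows exactly the paper's approach: the paper simply remarks that the corollary is a consequence of the equality $L_n=P_n'$ established in the proof of the preceding proposition, and your argument spells out precisely this (via $PS_n\simeq B_n/L_n=B_n/P_n'$). There is nothing to add.
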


The extension with abelian kernel $1\rightarrow \mathfrak{P}_n 
\rightarrow PS_n\rightarrow S_n\rightarrow 1$ induces an action of $S_n$ 
on $\mathfrak{P}_n$, given by $\sigma \cdot a=\tilde{\sigma }a
\tilde{\sigma }^{-1}$, for $\sigma \in S_n$ and $a\in \mathfrak{P}_n$, 
where $\tilde{\sigma }$ is an element of $PS_n$ with 
$\alpha (\tilde{\sigma })=\sigma $. In particular, on generators we have 
$s_k\cdot A_{i, j}=\sigma _kA_{i, j}\sigma _k^{-1}$, for 
$1\leq k\leq n-1$ and $1\leq i<j\leq n$. By using some of the formulae 
given above, one can describe explicitely this action, as follows: \\[2mm] 
(1) $s_k\cdot A_{i, j}=A_{i, j}\;$ if $\;k<i-1$; \\
(2) $s_{i-1}\cdot A_{i, j}=A_{i-1,j}$; \\
(3) $s_i\cdot A_{i, j}=A_{i+1,j}\;$ if $j-i>1$,   and 
$s_i\cdot A_{i,i+1}=A_{i,i+1}$; \\
(4) $s_k\cdot A_{i, j}=A_{i, j}\;$ if $\;i<k<j-1$; \\
(5) $s_{j-1}\cdot A_{i, j}=A_{i,j-1}\;$ if $\;j-i>1$,  and  
$s_{j-1}\cdot A_{j-1,j}=A_{j-1,j}$; \\
(6) $s_j\cdot A_{ij}=A_{i,j+1}\;$ for $\;1\leq i<j<n$;  \\
(7) $s_k\cdot A_{i, j}=A_{i, j}\;$ if $\;j<k$.\\[2mm]
Also, one can easily see that these formulae may be expressed more 
compactly as follows: if $\sigma \in \{s_1, \cdots, s_{n-1}\}$ and 
$1\leq i<j\leq n$ then $\sigma \cdot A_{i, j}=A_{\sigma (i), \sigma (j)}$, 
where we made the convention $A_{r, t}:=A_{t, r}$ for $t<r$. Since 
$s_1, \cdots, s_{n-1}$ generate $S_n$, this immediately implies: 
\begin{proposition}
For any $\sigma \in S_n$ and $1\leq i<j\leq n$, the action of $\sigma $ 
on $A_{i, j}$ is given by $\sigma \cdot A_{i, j}=A_{\sigma (i), \sigma (j)}$, 
with the convention $A_{r, t}:=A_{t, r}$ for $t<r$.  
\end{proposition}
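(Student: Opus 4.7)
The plan is to reduce the statement to the seven explicit formulas for the action of the generators $s_k$ on $A_{i,j}$ listed immediately before the proposition, and then to extend the compact formula $s_k\cdot A_{i,j}=A_{s_k(i),s_k(j)}$ from generators to arbitrary products by induction on the word length of $\sigma$.

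First I would check that each of the seven cases (1)--(7) is in fact an instance of the compact formula $s_k\cdot A_{i,j}=A_{s_k(i),s_k(j)}$, where we read $A_{r,t}:=A_{t,r}$ whenever $t<r$. Since $s_k=(k,k+1)$, this is a case-by-case verification: $s_k$ fixes both $i$ and $j$ in cases (1), (4), (7); shifts exactly one of them by $\pm 1$ in cases (2), (6), and in the ``generic'' subcases of (3) and (5); and in the remaining degenerate subcases $s_i\cdot A_{i,i+1}=A_{i,i+1}$ and $s_{j-1}\cdot A_{j-1,j}=A_{j-1,j}$, the transposition $s_k$ swaps $i$ and $j$, so $A_{s_k(i),s_k(j)}=A_{i+1,i}=A_{i,i+1}$ by the convention.

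Next I would observe that, thanks to the convention, the identity $s_k\cdot A_{a,b}=A_{s_k(a),s_k(b)}$ is valid for \emph{every} pair $(a,b)$ with $a\neq b$, not only for $a<b$: if $a>b$ then $A_{a,b}=A_{b,a}$, and applying the already-verified formula gives $s_k\cdot A_{b,a}=A_{s_k(b),s_k(a)}=A_{s_k(a),s_k(b)}$. With this symmetric version in hand, a straightforward induction on the length of a word $\sigma=s_{k_1}s_{k_2}\cdots s_{k_\ell}$ in the generators of $S_n$ yields
\begin{eqnarray*}
\sigma\cdot A_{i,j}
&=&s_{k_1}\cdot(s_{k_2}\cdots s_{k_\ell}\cdot A_{i,j})\\
&=&s_{k_1}\cdot A_{(s_{k_2}\cdots s_{k_\ell})(i),\,(s_{k_2}\cdots s_{k_\ell})(j)}\\
&=&A_{\sigma(i),\sigma(j)},
\end{eqnarray*}
where the last step uses the symmetric generator formula, since the intermediate pair produced by the inductive hypothesis need not be in increasing order.

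The main (and essentially only) technical point is the bookkeeping with the convention $A_{r,t}=A_{t,r}$: without it the induction could get stuck whenever an intermediate pair $(\tau(i),\tau(j))$ happens to be in decreasing order. Once the generator formula has been promoted to a statement symmetric in its two lower indices under this convention, both sides of the desired identity become well-defined and symmetric in $(i,j)$, and the induction goes through routinely.
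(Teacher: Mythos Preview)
Your proposal is correct and follows the same approach as the paper: the paper simply remarks that the seven formulas amount to the compact identity $s_k\cdot A_{i,j}=A_{s_k(i),s_k(j)}$ for generators and then says ``Since $s_1,\dots,s_{n-1}$ generate $S_n$, this immediately implies'' the proposition. You have merely unpacked that ``immediately'' into an explicit induction on word length, together with the observation that the convention $A_{r,t}=A_{t,r}$ is what makes the generator formula symmetric enough for the induction to go through.
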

\begin{proposition}
In $PS_n$ there is no element of order 2 whose image in $S_n$ 
is the transposition $s_1=(1, 2)$. Consequently, the extension 
$1\rightarrow \mathfrak{P}_n\rightarrow PS_n\rightarrow S_n\rightarrow 1$ 
is {\bf not} split.  
\end{proposition}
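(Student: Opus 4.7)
The plan is to use the explicit description of $\mathfrak{P}_n$ as the free abelian group $\mathbb{Z}^{n(n-1)/2}$ on the generators $\{A_{i,j}\}_{1\leq i<j\leq n}$, together with the explicit action of $S_n$ given in the preceding proposition, to reduce the existence of an order-$2$ lift of $s_1$ to an impossible linear equation over $\mathbb{Z}$.

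First I would assume, for contradiction, that $x\in PS_n$ satisfies $x^2=1$ and $\alpha(x)=s_1$. Since $\alpha(\sigma_1)=s_1$ and $\mathfrak{P}_n=\mathrm{Ker}(\alpha)$, we can write $x=a\sigma_1$ for a unique $a\in\mathfrak{P}_n$. Then, using that $\mathfrak{P}_n$ is normal in $PS_n$ and the notation $\sigma\cdot a=\tilde\sigma a\tilde\sigma^{-1}$ introduced above,
\begin{equation*}
1 \;=\; x^2 \;=\; a\sigma_1 a\sigma_1 \;=\; a\,(\sigma_1 a\sigma_1^{-1})\,\sigma_1^2 \;=\; a\cdot(s_1\cdot a)\cdot A_{1,2},
\end{equation*}
since $\sigma_1^2=A_{1,2}$. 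Because $\mathfrak{P}_n$ is abelian and free on the $A_{i,j}$, this is an identity in $\mathbb{Z}^{n(n-1)/2}$.

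Next I would expand $a=\sum_{1\leq i<j\leq n}m_{ij}A_{i,j}$ additively. By the previous proposition, $s_1$ swaps the pair of indices $\{1,2\}$ pointwise, hence it fixes $A_{1,2}$ and every $A_{i,j}$ with $i,j\geq 3$, while for $j\geq 3$ it interchanges $A_{1,j}$ and $A_{2,j}$. Thus the equation $a+(s_1\cdot a)+A_{1,2}=0$ decomposes into one coordinate per basis vector. Reading off the coefficient of $A_{1,2}$ gives
\begin{equation*}
2m_{12}+1=0,
\end{equation*}
which has no solution $m_{12}\in\mathbb{Z}$. This contradiction shows no such $x$ exists. (The other coordinates yield $m_{1j}+m_{2j}=0$ for $j\geq 3$ and $2m_{ij}=0$ for $3\leq i<j$, which are consistent, so the obstruction is genuinely concentrated in the $A_{1,2}$-coordinate.)

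For the second statement, a splitting $s\colon S_n\to PS_n$ of the extension would produce the element $s(s_1)\in PS_n$ of order dividing $2$, mapping to $s_1\neq 1$; hence $s(s_1)$ would have order exactly $2$, contradicting the first part. I expect the only step requiring care is bookkeeping the $S_n$-action on the basis $\{A_{i,j}\}$; everything else is a one-line parity argument.
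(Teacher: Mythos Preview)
Your proof is correct and follows essentially the same approach as the paper: write $x=a\sigma_1$ with $a\in\mathfrak{P}_n$, compute $x^2=a\,(s_1\cdot a)\,A_{1,2}$ using $\sigma_1^2=A_{1,2}$ and the explicit $S_n$-action on the generators $A_{i,j}$, and read off the $A_{1,2}$-coefficient $2m_{12}+1$ to obtain a parity obstruction. The paper carries out the same computation in multiplicative notation and lists the resulting exponents on all basis elements, whereas you phrase it additively and isolate the single relevant coordinate; the content is identical.
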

\begin{proof} Take $x\in PS_n$ such that $\alpha (x)=s_1$. Since 
$\alpha (\sigma _1)=s_1$, we obtain $x\sigma _1^{-1}\in Ker (\alpha )=
\mathfrak{P}_n$. One can easily see that 
$\mathfrak{P}_n$ is freely generated by 
$\{A_{i, j}\}_{1\leq i<j\leq n}$, so we can write uniquely   
$x=\prod_{1\leq i<j\leq n}A_{i, j}^{m_{ij}}\sigma_1$, with $m_{ij}\in 
{\mathbb Z}$. We compute:  
\begin{eqnarray*}
x^2&=&(\prod_{1\leq i<j\leq n}A_{i, j}^{m_{ij}}\sigma_1)
(\prod_{1\leq i<j\leq n}A_{i, j}^{m_{ij}}\sigma_1)\\
&=&(\prod_{1\leq i<j\leq n}A_{i, j}^{m_{ij}})
(\sigma_1\prod_{1\leq i<j\leq n}A_{i, j}^{m_{ij}}\sigma_1^{-1})
\sigma_1^2\\
&=&(\prod_{1\leq i<j\leq n}A_{i, j}^{m_{ij}})
(\prod_{1\leq i<j\leq n}\sigma_1 A_{i, j}^{m_{ij}}\sigma_1^{-1})
A_{1, 2}\\
&=&A_{1, 2}^{2m_{12}+1}(\prod_{3\leq j\leq n}A_{1, j}^{m_{1j}+m_{2j}}
A_{2, j}^{m_{1j}+m_{2j}})
(\prod_{3\leq i<j\leq n} A_{i, j}^{2m_{ij}}), 
\end{eqnarray*}
and this element cannot be trivial because $2m_{12}+1$ cannot be $0$. Note 
that for the last equality we used the commutation relations 
\begin{eqnarray*}
&&\sigma _1A_{1, 2}\sigma _1^{-1}=A_{1, 2}, \\
&&\sigma _1A_{1, j}\sigma _1^{-1}=A_{2, j}, \;\;\;\forall \;\;j\geq 3, \\
&&\sigma _1A_{2, j}\sigma _1^{-1}=A_{1, j}, \;\;\;\forall \;\;j\geq 3, \\
&&\sigma _1A_{i, j}\sigma _1^{-1}=A_{i, j}, \;\;\;\forall \;\;3\leq i<j, 
\end{eqnarray*} 
which can be easily proved by using some of the formulae given above. 
\end{proof}
\begin{remark}
{\em As it is well-known (see \cite{brown}), any extension with abelian kernel 
corresponds to a 2-cocycle. In particular, the extension 
$1\rightarrow \mathfrak{P}_n\rightarrow PS_n\rightarrow S_n
\rightarrow 1$ corresponds to an element in 
$H^2(S_n, {\mathbb Z}^{\frac{n(n-1)}{2}})$. We illustrate this by computing 
explicitely the corresponding 2-cocycle for $n=3$.  
We consider the set-theoretical section $f:S_3\rightarrow PS_3$ defined by   
$f(1)=1$, $f(s_2)=\sigma _2$, $f(s_1)=
\sigma _1$, $f(s_1s_2)=\sigma _1\sigma _2$,  
$f(s_2s_1)=\sigma _2\sigma _1$ and    
$f(s_2s_1s_2)=\sigma _2\sigma _1\sigma _2$. The 2-cocycle afforded by this 
section is defined by 
$\;u:S_3\times S_3\rightarrow \mathfrak{P}_3,\;$ 
$u(x,y):=f(x)f(y)f(xy)^{-1}$, 
and a direct computation gives its explicit formula as in the following 
table (we have chosen here an additive notation for the abelian group 
$\mathfrak{P}_3\simeq {\mathbb Z}^3$):}
\begin{table}[h]
\begin{center}
\begin{tabular}{|l|r|c|c|c|c|c|}
 \hline
 & 1 & $s_2$ &$s_1$ & $s_1s_2$& $s_2s_1$& 
$s_2s_1s_2$\\ \hline
1 & 0& 0 &0& 0&0&0\\ \hline
$s_2$ & 0 & $A_{2, 3}$& 0&0&$A_{2, 3}$&$A_{2, 3}$\\ \hline
$s_1$ & 0 & 0& $A_{1, 2}$&$A_{1, 2}$& 0&$A_{1, 2}$\\ \hline
$s_1s_2$  & 0 & $A_{1, 3}$& 0&$A_{1, 2}$& $A_{1, 2}+A_{1, 3}$&
$A_{1, 2}+A_{1, 3}$\\ 
\hline
$s_2s_1$ & 0 & 0& $A_{1, 3}$&$A_{1, 3}+A_{2, 3}$& $A_{2, 3}$&
$A_{1, 3}+A_{2, 3}$\\ 
\hline
$s_2s_1s_2$ & 0 & $A_{1, 2}$& $A_{2, 3}$&$A_{1, 3}+A_{2, 3}$& 
$A_{1, 2}+A_{1, 3}$& 
$A_{1, 2}+A_{1, 3}+A_{2, 3}$\\ \hline
\end{tabular}
\caption{The 2-cocycle for n=3 associated to the section f}
\label{tab:2co}
\end{center}
\end{table}
\end{remark}
\section{$PS_n$ is linear} 
\setcounter{equation}{0}
${\;\;\;\;}$
In \cite{big}, \cite{krammer} it was proved  that the braid group $B_n$ 
is linear.  
More precisely, let $R$ be a commutative ring, $q$ and $t$ two invertible  
elements in $R$ and $V$ a free $R$-module of rank $n(n-1)/2$ with a 
basis $\{x_{i,j}\}_{1\leq i<j\leq n}$. Then the map   
$\rho:B_n\rightarrow  GL(V)$ defined by
\begin{eqnarray*}
&\sigma_kx_{k,k+1}=tq^2x_{k,k+1},& \\
&\sigma_kx_{i,k}=(1-q)x_{i,k}+qx_{i,k+1}, \; \; &i<k,\\
&\sigma_kx_{i,k+1}=x_{i,k}+tq^{k-i+1}(q-1)x_{k,k+1},\; \; &i<k,\\
&\sigma_kx_{k,j}=tq(q-1)x_{k,k+1}+qx_{k+1,j},\; \; &k+1<j,\\
&\sigma_kx_{k+1,j}=x_{k,j}+(1-q)x_{k+1,j},\; \; &k+1<j,\\
&\sigma_kx_{i,j}=x_{i,j},\; \; &i<j<k\;  or \;k+1<i<j,\\    
&\sigma_kx_{i,j}=x_{i,j}+tq^{k-i}(q-1)^2x_{k,k+1},\; \; &i<k<k+1<j, 
\end{eqnarray*}
and $\rho (x)(v)=xv$, for $x\in B_n$, $v\in V$,  
gives a representation of $B_n$, and if moreover  
$R={\mathbb R}[t^{\pm 1}]$ and $q\in {\mathbb R}\subseteq R$ with 
$0<q<1$, then the representation is faithful, see \cite{krammer}.

We consider now the general formula for $\rho$, in which we take $q=1$: 
\begin{eqnarray*}
&\sigma_kx_{k,k+1}=tx_{k,k+1},&\\
&\sigma_kx_{i,k}=x_{i,k+1}, \; \; &i<k,\\
&\sigma_kx_{i,k+1}=x_{i,k},\; \; &i<k,\\
&\sigma_kx_{k,j}=x_{k+1,j},\; \; &k+1<j,\\
&\sigma_kx_{k+1,j}=x_{k,j},\; \; &k+1<j,\\
&\sigma_kx_{i,j}=x_{i,j},\; \; &i<j<k\;  or \;k+1<i<j,\\    
&\sigma_kx_{i,j}=x_{i,j},\; \; &i<k<k+1<j.
\end{eqnarray*} 
One can easily see that these formulae imply  
\begin{eqnarray*}
\sigma_k^2 x_{k,k+1}&=&t^2x_{k,k+1},\\
\sigma_k^2 x_{i,j}&=&x_{i,j},\;\; if\;\; (i,j)\neq (k,k+1).
\end{eqnarray*}
One can then check that $\rho(\sigma_k^2)$ commutes with 
$\rho(\sigma_{k+1}^2)$, for all $1\leq k\leq n-2$,  
and so for $q=1$ it turns out that  
$\rho$ is a representation of $PS_n$.
\begin{theorem}  
This representation of $PS_n$ is faithful if $R={\mathbb R}[t^{\pm 1}]$. 
Henceforth, $PS_n$ is linear.
\end{theorem}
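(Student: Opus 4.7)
The strategy is to observe that at $q=1$ the representation $\rho$ takes values in the group of monomial matrices (permutation matrices times diagonal matrices) on the basis $\{x_{i,j}\}_{1\leq i<j\leq n}$, and to deduce faithfulness by tracking the permutation part and the diagonal part separately along the extension $1\to\mathfrak{P}_n\to PS_n\to S_n\to 1$.

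First I would verify that each $\rho(\sigma_k)$ is a monomial matrix: on the given basis it permutes $x_{i,j}\mapsto x_{s_k(i),s_k(j)}$ (unordered pairs), and the only nontrivial scalar is the factor $t$ that multiplies $x_{k,k+1}$. Since monomial matrices form a subgroup of $GL(V)$ and the permutation part of a product of monomial matrices is the product of the permutation parts, for every $x\in PS_n$ the matrix $\rho(x)$ is monomial and its underlying permutation of basis vectors is precisely the induced action of $\alpha(x)\in S_n$ on pairs.

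Next I would compute $\rho(A_{i,j})$. Starting from $A_{i,j}=\sigma_{j-1}\cdots\sigma_{i+1}\sigma_i^2\sigma_{i+1}^{-1}\cdots\sigma_{j-1}^{-1}$ and the already-recorded formula that $\rho(\sigma_i^2)$ is the diagonal matrix $D$ with entry $t^2$ at position $(i,i+1)$ and $1$ elsewhere, I conjugate $D$ by the monomial matrix $M=\rho(\sigma_{j-1}\cdots\sigma_{i+1})$. Writing $M=PE$ with $P$ a permutation matrix and $E$ diagonal, the diagonal factor $E$ commutes with $D$, so $MDM^{-1}=PDP^{-1}$; this is diagonal, with the $t^2$ entry moved from $(i,i+1)$ to the image of $\{i,i+1\}$ under $s_{j-1}\cdots s_{i+1}$, which is $\{i,j\}$. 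Hence $\rho(A_{i,j})$ is the diagonal matrix with entry $t^2$ at position $(i,j)$ and $1$ elsewhere.

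Finally, assume $\rho(x)=\mathrm{id}_V$. The monomial structure shows that the permutation part of $\rho(x)$ is the action of $\alpha(x)$ on unordered pairs, which must be trivial; since $n\geq 3$, the $S_n$-action on such pairs is faithful, so $\alpha(x)=1$ and $x\in\mathfrak{P}_n$. Using that $\mathfrak{P}_n\cong\mathbb{Z}^{n(n-1)/2}$ is freely generated by the $A_{i,j}$, write $x=\prod_{1\leq i<j\leq n}A_{i,j}^{m_{ij}}$; by the previous step $\rho(x)$ is the diagonal matrix with entries $t^{2m_{ij}}$, and since $t$ has infinite multiplicative order in $\mathbb{R}[t^{\pm 1}]$ this forces every $m_{ij}=0$, hence $x=1$. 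The main technical obstacle is the monomial/conjugation bookkeeping in the second step; once that is carefully carried out, the rest is a formal consequence of the short exact sequence and the infinite order of $t$.
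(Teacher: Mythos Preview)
Your proof is correct and follows essentially the same route as the paper: both exploit that at $q=1$ the matrices $\rho(\sigma_k)$ are monomial with permutation part given by the $S_n$-action on unordered pairs, both compute that $\rho(A_{i,j})$ is diagonal with a single $t^2$ at position $(i,j)$, and both conclude faithfulness by first reading off $\alpha(x)=1$ from the permutation part and then killing the exponents $m_{ij}$ from the diagonal part. The only cosmetic difference is that the paper obtains the formula for $\rho(A_{i,j})$ by induction on $j-i$ via the recursion $A_{i,j}=\sigma_{j-1}A_{i,j-1}\sigma_{j-1}^{-1}$, whereas you do the whole conjugation in one stroke using the decomposition $M=PE$ and the trivial commutation of diagonal matrices; these are the same computation packaged differently.
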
 
\begin{proof} We first prove the following relations: 
\begin{eqnarray*}
A_{i,j}x_{i,j}&=&t^2x_{i,j}, \\
A_{i,j}x_{k,l}&=&x_{k,l},\;\; if\;\; (i,j)\neq (k,l).
\end{eqnarray*}
We do it by induction over $|j-i|$. If $|j-i|=1$ the relations follow from 
the fact that $A_{i,i+1}=\sigma_i^2$. Assume the relations hold for  
$|j-i|=s-1$. We want to prove them for $|j-i|=s$. We recall that  
$A_{i,j}=\sigma_{j-1}A_{i,j-1}\sigma_{j-1}^{-1}$, see (\ref{slab1}).  
We compute:  
\begin{eqnarray*}
A_{i,j}x_{i,j}&=&\sigma_{j-1}A_{i,j-1}\sigma_{j-1}^{-1}x_{i,j}\\
&=&\sigma_{j-1}A_{i,j-1}x_{i,j-1}\\
&=&\sigma_{j-1}t^2x_{i,j-1}\;\; (by \; induction)\\
&=&t^2x_{i,j}.
\end{eqnarray*}
On the other hand, if $(i,j)\neq(k,l)$ then 
$\sigma_{j-1}^{-1}x_{k,l}=x_{u,v}$ with  
$(i,j-1)\neq (u,v)$ and so we have:
\begin{eqnarray*}
A_{i,j}x_{k,l}&=&\sigma_{j-1}A_{i,j-1}\sigma_{j-1}^{-1}x_{k,l}\\
&=&\sigma_{j-1}A_{i,j-1}x_{u,v}\\
&=&\sigma_{j-1}x_{u,v}\;\; (by \; induction)\\
&=&\sigma_{j-1}\sigma_{j-1}^{-1}x_{k,l}\\
&=&x_{k,l}, \;\;\;q.e.d.
\end{eqnarray*}

To show that the representation is faithful take $b\in PS_n$ such that 
$\rho(b)=id_V$ and consider $\alpha (b)$, the image of $b$ in $S_n$.  
From the way $\rho$ is defined it follows  that 
\begin{eqnarray*}
bx_{i,j}=t^px_{\alpha (b)(i),\alpha (b)(j)}, \;\;\forall \;\;1\leq i<j\leq n, 
\end{eqnarray*}
with $p\in {\mathbb Z}$, where we made the convention $x_{r, s}:=x_{s, r}$ if 
$1\leq s<r\leq n$. 
Since $x_{i,j}$ is a basis in $V$ and we assumed $\rho (b)=id_V$, we obtain 
that the permutation $\alpha (b)\in S_n$ has the following property: if 
$1\leq i<j\leq n$ then either $\alpha (b)(i)=i$ and   
$\alpha (b)(j)=j$ or $\alpha (b)(i)=j$ and    
$\alpha (b)(j)=i$. Since we assumed $n\geq 3$, the only such permutation is 
the trivial one. Thus, we have obtained that $b\in Ker (\alpha )=
\mathfrak{P}_n$ and so we can write  
$b=\prod_{1\leq i<j\leq n}A_{i,j}^{m_{i,j}}$, with $m_{i, j}\in {\mathbb Z}$. 
By using the formulae given above for the action of $A_{i, j}$ on  
$x_{k, l}$ we immediately obtain 
\begin{eqnarray*}
bx_{k, l}=t^{2m_{k, l}}x_{k, l}, \;\;\forall \;\;1\leq k<l\leq n.  
\end{eqnarray*}
Using again the assumption $\rho (b)=id_V$, 
we obtain $t^{2m_{k, l}}=1$ and hence  
$m_{k, l}=0$ for all $1\leq k<l\leq n$, that is $b=1$, finishing 
the proof.
\end{proof}
\section{Pseudosymmetric groups and pseudosymmetric braidings}
\setcounter{equation}{0}
${\;\;\;\;}$
We recall (see \cite{k}, XIII.2) that to braid groups one can associate 
the so-called {\em braid category} ${\mathcal B}$, a universal braided 
monoidal category. Similarly, we can construct 
a pseudosymmetric braided category $\mathcal {PS}$ associated to 
pseudosymmetric groups.  Namely,  
the objects of $\mathcal {PS}$ are natural numbers $n\in {\mathbb N}$. 
The set of morphisms from $m$ to $n$ is empty if $m\neq n$ and is 
$PS_n$ if $m=n$. The monoidal structure of $\mathcal {PS}$ is defined as the 
one for ${\mathcal B}$, and so is the braiding, namely   
\begin{eqnarray*}
&&c_{n, m}:n\otimes m\rightarrow m\otimes n, \\
&&c_{0, n}=id_n=c_{n, 0}, \\
&&c_{n,m}=(\sigma_m\sigma_{m-1}\cdots \sigma_1)
(\sigma_{m+1}\sigma_{m}\cdots \sigma_2) 
\cdots (\sigma_{m+n-1}\sigma_{m+n-2}\cdots \sigma_n), \;\;if\;\;m, n>0.
\end{eqnarray*}
We denote by $t_{m,n}=c_{n,m}\circ c_{m,n}$ the double braiding. In view of 
Proposition \ref{carpseudosym}, in order to prove that $c$ is 
pseudosymmetric it is enough to check that, for all 
$m, n, p\in {\mathbb N}$ we have
\begin{eqnarray*}
&&(t_{m,n}\otimes id_p)\circ (id_m\otimes t_{n,p})=
(id_m\otimes t_{n,p})\circ (t_{m,n}\otimes id_p).
\end{eqnarray*}
This equality holds because 
it is a commutation relation between two elements in 
$\mathfrak{P}_{m+n+p}$.

Let ${\mathcal C}$ be a strict braided monoidal category with braiding $c$, 
let $n$ be a natural  
number and $V\in {\mathcal C}$. Consider the automorphisms 
$c_1, \cdots , c_{n-1}$ of $V^{\otimes \;n}$ defined by 
$c_i=id_{V^{\otimes \;(i-1)}}\otimes c_{V, V}\otimes 
id_{V^{\otimes \;(n-i-1)}}$. It is well-known (see \cite{k}) that there 
exists a unique group morphism $\rho _n^c:B_n\rightarrow 
Aut (V^{\otimes \;n})$ such that $\rho _n^c(\sigma _i)=c_i$, for all 
$1\leq i\leq n-1$. It is clear that, if $c$ is pseudosymmetric, then 
$\rho _n^c$ factorizes to a group morphism $PS_n\rightarrow 
Aut (V^{\otimes \;n})$. Thus, pseudosymmetric braided categories provide 
representations of pseudosymmetric groups.     

\end{document}